\newtheorem{thm}{Theorem}[section]
 \numberwithin{equation}{section}
 \def\Proj{\mathop{\rm Proj}\nolimits}
 \def\Spec{\mathop{\rm Spec}\nolimits}
 \def\deg{\mathop{\rm deg}\nolimits}
\def\GL{\mathop{\rm GL}\nolimits}
\let\phi\varphi
\let\epsilon\varepsilon
\newcommand{\BC}{{\mathbb{C}}}
\newcommand{\BF}{{\mathbb{F}}}
\newcommand{\BG}{{\mathbb{G}}}
\newcommand{\BP}{{\mathbb{P}}}
\newcommand{\BQ}{{\mathbb{Q}}}
\newcommand{\BR}{{\mathbb{R}}}
\newcommand{\BZ}{{\mathbb{Z}}}
\newbox\mybox
\def\arrover#1{\mathrel{
       \setbox\mybox=\hbox spread 1.4em
              {\hfil$\scriptstyle#1$\hfil}
       \vbox{\offinterlineskip\copy\mybox
             \hbox to\wd\mybox{\rightarrowfill}}}}
\def\larrover#1{\mathrel{
       \setbox\mybox=\hbox spread 1.4em
              {\hfil$\scriptstyle#1\vphantom{g}$\hfil}
       \vbox{\offinterlineskip\copy\mybox
             \hbox to\wd\mybox{\leftarrowfill}}}}
\def\ontoover#1{\mathrel{
       \setbox\mybox=\hbox spread 1.4em
              {\hfil$\scriptstyle#1\vphantom{g}$\hfil}
       \vbox{\offinterlineskip\copy\mybox
             \hbox to\wd\mybox{\rightarrowfill\hskip-2.8mm
                               $\rightarrow$}}}}
\def\leftontoover#1{\mathrel{
       \setbox\mybox=\hbox spread 1.4em
              {\hfil$\scriptstyle#1\vphantom{g}$\hfil}
       \vbox{\offinterlineskip\copy\mybox
             \hbox to\wd\mybox{$\leftarrow$\hskip-2.8mm
                               \leftarrowfill}}}}
\let\longto\longrightarrow
\def\Cinf{{\BC}_\infty}
\def\Kinf{K_\infty}
\begin{document}

\title{A note on Gekeler's $h$-function}
\author{Florian Breuer \footnote{Supported by grant no. IFRR96241 of the National Research Foundation of South Africa}\\
Stellenbosch University, Stellenbosch, South Africa\\
fbreuer@sun.ac.za}
\maketitle

\begin{abstract}
We give a brief introduction to Drinfeld modular forms, concentrating on the many equivalent constructions of the form $h$ of weight $q+1$ and type $1$, to which we contribute some new characterizations involving Moore determinants, and an application to the Weil pairing on Drinfeld modules. We also define Drinfeld modular functions of non-zero type and provide a moduli interpretation of these.
\end{abstract}
\maketitle


\section{Drinfeld modular forms}

We start with a brief introduction to Drinfeld modular forms, see e.g. \cite{gekeler_survey_1999} for more details.

Let $\BF_q$ denote the finite field of order $q$, and set $A=\BF_q[T]$, the ring of polynomials over $\BF_q$. 
We furthermore set $K=\BF_q(T)$, $\Kinf = \BF_q((\frac{1}{T}))$, the completion of $K$ at the infinite place, and $\Cinf = \hat{\bar{K}}_{\infty}$, the completion of an algebraic closure of $\Kinf$, which is an algebraically closed complete non-Archimedean field. 
The rings $A,K,\Kinf$ and $\Cinf$ are the function field analogues of the more usual $\BZ,\BQ,\BR$ and $\BC$. 

An {\em $A$-lattice} $\Lambda\subset\Cinf$ of rank $r\geq 1$ is an $A$-submodule of the form $\Lambda = \omega_1A + \omega_2A + \cdots + \omega_rA$, where $\omega_1,\omega_2,\dots,\omega_r\in\Cinf$ are linearly independent over $\Kinf$. To such a lattice we associate its {\em exponential function} 
\[
e_\Lambda (x) = x\prod_{0\neq\lambda\in \Lambda}\left(1-\frac{x}{\lambda}\right),
\]
and $e_\Lambda : \Cinf \to \Cinf$ is holomorphic, surjective, $\Lambda$-periodic and $\BF_q$-linear, with simple zeroes on $\Lambda$. It is the analogue of the usual exponential function when $r=1$ and of the elliptic functions when $r=2$. 
Since $e_\Lambda$ is $\BF_q$-linear, its logarithmic derivative is 
\[
\frac{1}{e_{\Lambda}(x)} = \sum_{\lambda\in\Lambda}\frac{1}{x+\lambda}.
\]

Denote by $\Cinf\{X^q\} = \{a_0X+a_1X^q + \cdots + a_nX^{q^n} \;|\; n\geq 0, \, a_0,\ldots,a_n\in\Cinf\}$ the non-commutative ring of $\BF_q$-linear polynomials over $\Cinf$, where multiplication is defined via composition of polynomials.

For each $a\in A$ the exponential function satisfies the functional equation
\[
e_{\Lambda}(ax) = \phi^\Lambda_a(e_\Lambda(x)), 
\]
where $\phi^\Lambda_a(X) \in \Cinf\{X^q\}$ has degree $q^{r\deg a}$. The map
\[
A \longto \Cinf\{X\}; \qquad a\longmapsto \phi^{\Lambda}_a(X)
\]
is an $\BF_q$-algebra monomorphism called a {\em Drinfeld module of rank $r$}, and plays the role of $\BG_{\mathrm m}$ in rank $r=1$ and of elliptic curves when $r=2$. There seems to be no classical analogue for Drinfeld modules of rank $r\geq 3$. More information on Drinfeld modules can be found in \cite[Chapter 4]{goss_basic_1996}.

The archetypal rank 1 Drinfeld module is the {\em Carlitz module}, defined~by
\[
\rho_T(X) = TX + X^q.
\]
It corresponds to the lattice $\bar\pi A$, where $\bar\pi\in\Cinf$ is defined up to a factor in $\BF_q^*$, is transcendental over $K$,  and plays the role of the classical $2\pi i \in\BC$. Any two rank 1 lattices in $\Cinf$ are homothetic, and consequently any two rank 1 Drinfeld modules are isomorphic over $\Cinf$.

In the case of rank $r=2$, every lattice is homothetic to a lattice of the form $zA+A$, where $z$ lies in the Drinfeld period domain
\[
\Omega := \Cinf - \Kinf,
\]
which is analogous to the classical upper half-plane. The group $\Gamma = \GL_2(A)$ acts on $\Omega$ via fractional linear transformations,
\[
\gamma(z) = \frac{az+b}{cz+d}, \qquad \gamma=\begin{pmatrix} a & b \\ c & d \end{pmatrix}\in\Gamma.
\]
Analogous to the classical situation, we can define: a {\em Drinfeld modular form} for $\Gamma$ of weight $k\in\BZ_+$ and type $m\in\BZ/(q-1)\BZ$ is a holomorphic function $f : \Omega \longto \Cinf$ satisfying
\begin{enumerate}
	\item[(a)] $\displaystyle f(\gamma z) = \det(\gamma)^{-m}(cz+d)^kf(z)$ for all $\gamma\in\Gamma$, and
	\item[(b)] $f(z)$ is holomorphic at infinity.
\end{enumerate}
This second condition can be understood as follows. From (a) follows that $f(z+1)=f(z)$, hence it admits a ``Fourier series''
\[
f(z) = \sum_{n\in\BZ} a_nt(z)^n, \qquad a_n\in\Cinf,
\]
where
\[
t(z) := \frac{1}{e_{\bar\pi A}(\bar\pi z)} = \frac{1}{\bar\pi e_A(z)} = \bar\pi^{-1} \sum_{a\in A}\frac{1}{z+a}
\]
is the parameter at infinity analogous to the classical $\exp(2\pi i z)$. Now (b) asserts that in the above expansion, $a_n=0$ for all $n<0$. If additionally $a_0=0$, we call $f$ a {\em cusp form}.

If we set $\gamma = \left(\begin{smallmatrix} \alpha & 0 \\ 0 & \alpha \end{smallmatrix}\right)$ with $\alpha\in\BF_q^*$, then the transformation rule (a) gives $f(z)=\alpha^{k-2m}f(z)$, so non-zero modular forms for $\Gamma$ can only exist if the type and weight satisfy $k\equiv 2m \pmod{q-1}$.

The set $M_{k,m}(\Gamma)$ of Drinfeld modular forms of weight $k$ and type $m$ forms a finite dimensional $\Cinf$-vector space.

As a first example of such forms, for $z\in\Omega$ we define the rank 2 lattice
\[
\Lambda_z = \bar{\pi}(zA+A)\subset\Cinf
\] 
and denote the associated rank 2 Drinfeld module by $\phi^z$. It is determined by
\[
\phi^z_T(X) = TX + g(z)X^q + \Delta(z)X^{q^2},
\]
whose coefficients $g(z)$ and $\Delta(z)$ are Drinfeld modular forms of type 0 and weights $q-1$ and $q^2-1$, respectively. 

The factor $\bar\pi$ in the definition of $\Lambda_z$ has been included so that the Fourier coefficients of $g$ and $\Delta$ turn out to be elements of $A$. This normalization is consistent with the second half of \cite{gekeler_coefficients_1988}; the reader is warned that normalization conventions vary across the literature. 

It turns out that $\Delta$ is a cusp form and David Goss \cite{goss_modular_1980}  has shown that the graded ring of Drinfeld modular forms of type 0 is generated by $g$ and $\Delta$:
\[
\bigoplus_{k\geq 1} M_{k,0}(\Gamma) = \Cinf[g,\Delta]
\]
and $g$ and $\Delta$ are algebraically independent over $\Cinf$.

\section{The many faces of $h$}

In this note, we are interested in Drinfeld modular forms of type 1 and weight $q+1$, the first instance of forms with non-zero type. It is known that $M_{q+1,1}(\Gamma)$ is one-dimensional, so there is only one such form, up to a constant multiple, and it is traditionally denoted $h$. It is defined as 
%
the Poincar\'e series 
\begin{equation}
h(z) := \sum_{\gamma\in H\backslash\Gamma} \det(\gamma)(cz+d)^{-q-1}t(\gamma z),
\label{eq:Poincare}
\end{equation}
where $H$ is the subgroup of $\Gamma$ of elements of the form $\left(\begin{smallmatrix} * & * \\ 0 & 1 \end{smallmatrix}\right)$.
The earliest appearance of such series in the literature appears to be in \cite[Chapter~X]{gerritzen_schottky_1980}.

In his seminal paper \cite{gekeler_coefficients_1988}, Ernst-Ulrich Gekeler proved that $h$ is a nowhere vanishing cusp form for $\Gamma$ of weight $q+1$ and type $1$, computed its first few Fourier coefficients and proved that the graded ring of modular forms of arbitrary type is generated by $g$ and $h$:
\begin{equation}
\bigoplus_{k\geq 1, \; m\in \BZ/(q-1)\BZ} M_{k,m}(\Gamma) = \Cinf[g,h],
\label{eq:gradedring}
\end{equation}
where $g$ and $h$ are algebraically independent.

Gekeler also proved a number of other characterizations of $h$, such as

%
%

\begin{equation}
h(z) = \bar\pi^{-1}\left(\frac{d}{dz} - (q-1)\frac{\Delta'(z)}{\Delta(z)}\right)g(z),
\label{eq:derivation}
\end{equation}
the {\em Serre derivative} of $g$, and
\begin{equation}
h(z)^{q-1} = -\Delta(z).
\label{eq:Deltaroot}
\end{equation}

Thus, from his product formula \cite{gekeler_product_1985} for $\Delta$,
\begin{equation}
h(z) = -t(z)\prod_{a\in A_+}f_a(t(z))^{q^2-1},
\label{eq:hproduct}
\end{equation}
where $A_+$ denotes the monic elements of $A$ and
$f_a(X) = X^{q^{\deg(a)}}\rho_a(X^{-1})$. 
See also \cite[(6.2)]{gekeler_modulare_1984}.

Furthermore,
\begin{equation}
h(z) = -\bar\pi^{-q}\left|\begin{matrix}z & 1 \\ \eta_1(z) & \eta_2(z)\end{matrix}\right|^{-1},
\label{eq:Legendre}
\end{equation}
where $\eta_1$ and $\eta_2$ are certain quasi-periodic functions \cite{gekeler_quasi-periodic_1989}. This is an analogue of the Legendre period relation, and links up with the De Rham cohomology of Drinfeld modules.

Finally, another attractive characterization of $h$ is via its {\em $A$-expansion}, due to Bartolom{\'e} L\'opez \cite{lopez_non-standard_2010}:
\begin{equation}
h(z) = -\sum_{a\in A_+}a^qt(az).
\label{eq:Aexpansion}
\end{equation}

\section{$T$-torsion and Moore determinants}

Let $V = (T^{-1}A/A)^{2} \cong \BF_q^2$. Then for each $v=(v_1T^{-1},v_2T^{-1})\in V' = V - \{0\}$ we obtain a weight one Eisenstein series for $\Gamma(T) = \ker \big(\Gamma \to \GL_2(\BF_q)\big)$,   
\begin{align*}
E_v(z) & := \bar\pi^{-1} \hspace{-15pt} \sum_{\scriptsize \begin{array}{c} a,b\in T^{-1}A \\ (a,b) \equiv v \bmod A^2 \end{array} } \frac{1}{az+b} \\
& = \frac{1}{\bar\pi e_{zA+A}(T^{-1}(v_1z+v_2))} = \frac{1}{e_{\Lambda_z}\big(\bar\pi T^{-1}(v_1z+v_2)\big)}.
\end{align*} 
Their reciprocals are the non-zero $T$-torsion points of $\phi^z$, i.e. we have
\[
\phi_T^z(X) = TX + g(z)X^q + \Delta(z)X^{q^2} = TX\prod_{v\in V'}\big(1-E_u(z)X). 
\]
In particular, we obtain
\begin{equation} 
\Delta(z) = T\prod_{v\in V'}E_v(z). \label{eq:Dprod}
\end{equation}

Gunther Cornelissen has proved \cite{cornelissen_drinfeld_1997-1} that in fact the ring of modular forms for $\Gamma(T)$ is generated by the $E_u$,
\[
\bigoplus_{k\geq 1}M_k(\Gamma(T)) = \Cinf[E_u \;|\; u\in V'];
\]
in this case there exist algebraic relations between these generators.

In \cite[(9.3)]{gekeler_coefficients_1988}, Gekeler showed that
\begin{equation}
h(z) = c\sum_{u,v \in V, \; \langle u,v \rangle = 1} E_u^q(z) E_v(z),
\label{eq:alternating}
\end{equation}
for some non-zero constant $c\in\Cinf^*$, where $\langle \cdot,\cdot \rangle$ denotes a non-degenerate alternating form on $V$. 

Our goal is to deduce some more characterizations of $h$ along these lines.

Recall that the {\em Moore determinant} (see \cite[Chapter 1.3]{goss_basic_1996}) of elements $x_1,x_2,\ldots,x_n$ of a field containing $\BF_q$ is defined by 
\[
M(x_1,x_2,\dots,x_n) = \det\left(x_i^{q^{j-1}}\right)_{1\leq i\leq n, \; 1\leq j\leq n.}
\]
Now choose an ordered basis $u,v$ for $V$, then
\begin{eqnarray*}
\phi^z_T(X) & = & T \, M\big(E_u(z)^{-1},E_v(z)^{-1},X\big)/M\big(E_u(z)^{-1},E_v(z)^{-1}\big)^q \\
            & = & \Delta(z) \, M\big(E_u(z)^{-1},E_v(z)^{-1},X\big)/M\big(E_u(z)^{-1},E_v(z)^{-1}\big),
\end{eqnarray*}
this being the unique polynomial with roots $\BF_q E_u(z)^{-1} + \BF_q E_v(z)^{-1}$, linear term $TX$ and leading coefficient $\Delta(z)$, from which we obtain 
\begin{equation}
\Delta(z) = T\, M\big(E_u(z)^{-1},E_v(z)^{-1}\big)^{1-q}
\end{equation}
and thus, by (\ref{eq:Deltaroot}),
\begin{equation}
h(z) = \sqrt[q-1]{-T} \, M\big(E_u(z)^{-1},E_v(z)^{-1}\big)^{-1}.
\end{equation}
Here $\sqrt[q-1]{-T}$ denotes a $(q-1)$th root of $-T$, which we determine next.
%

Let $u=(u_1T^{-1},u_2T^{-1}), \,v=(v_1T^{-1},v_2T^{-1}) \in V'$. Since the mapping $w\mapsto E_w(z)^{-1}$ is $\BF_q$-linear, we have
\begin{eqnarray*}
M\big(E_u(z)^{-1},E_v(z)^{-1}\big) & = & -\left|\begin{matrix} u_1 & u_2 \\ v_1 & v_2 \end{matrix}\right| \; 
M\big(E_{(0,T^{-1})}(z)^{-1},E_{(T^{-1},0)}(z)^{-1}\big) \\
& = & -\left|\begin{matrix} u_1 & u_2 \\ v_1 & v_2 \end{matrix}\right| \; 
E_{(0,T^{-1})}(z)^{-1} \prod_{\epsilon\in\BF_q} E_{(T^{-1},\epsilon T^{-1})}(z)^{-1},
\end{eqnarray*}
by the Moore Determinant Formula \cite[Cor. 1.3.7]{goss_basic_1996}.

From \cite[(2.1)]{gekeler_modulare_1984} we obtain the following expansions of $E_u(z)$ in terms of the parameter 
\[
t_T = t_{T}(z) :=  \frac{1}{e_{\bar\pi A}(T^{-1}\bar\pi z)} = \frac{1}{\bar\pi e_A(T^{-1}z)} = t(T^{-1}z)
\]
at the cusp $\infty$ of $\Gamma(T)\backslash\Omega$:
\begin{eqnarray*}
E_{(0,T^{-1})}(z) & = & \lambda_T^{-1} + o(t_T), \\
E_{(T^{-1},\epsilon T^{-1})}(z) & = & t_T(z) + o(t_T^2), \quad (\epsilon\in\BF_q),
\end{eqnarray*}
where
\[
\quad \lambda_T = \bar\pi e_A(T^{-1})\in \rho[T]
\]
is a specific $(q-1)$th root of $-T$.

Comparing this with (e.g. from (\ref{eq:hproduct}))
\[
h(z) = -t(z) + o(t^2) = -t_T(z)^q + o(t_T^{q+1}),
\]
we obtain

\begin{thm}
Let $u=(u_1T^{-1},u_2T^{-1}), \,v=(v_1T^{-1},v_2T^{-1}) \in V'$ and $\lambda_T = \bar\pi e_A(T^{-1})\in \rho[T]$. 
The following relations hold:
\begin{align}
M\big(E_u(z)^{-1},E_v(z)^{-1}\big) & = \lambda_T \left|\begin{matrix} u_1 & u_2 \\ v_1 & v_2 \end{matrix}\right| h(z)^{-1}, \quad\text{and}\label{eq:MooreDet2} \\
h(z) & = \lambda_T \, E_{(0,T^{-1})}(z) \prod_{\epsilon\in\BF_q} E_{(T^{-1},\epsilon T^{-1})}(z)  \label{eq:h1}\\
& = c\lambda_T \, \prod_{w\in\BP(V)} E_w(z), \label{eq:h2}
\end{align}
where the last product runs over a set of representatives in $V$ of the projective line $\BP(V) \cong \BP^1(\BF_q)$, and $c\in\BF_q^*$ depends on the choice of these representatives. \qed
\end{thm}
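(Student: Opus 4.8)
The plan is to assemble the three identities from the computations already laid out above, the only genuinely new inputs being the cusp expansions and a little linear algebra of Moore determinants. First I would note that the Moore-determinant form of $\phi^z_T$, together with \eqref{eq:Deltaroot}, gives for \emph{any} ordered basis $(u,v)$ of $V$ an identity $h(z)=\zeta\,M\big(E_u(z)^{-1},E_v(z)^{-1}\big)^{-1}$ in which $\zeta^{q-1}=-T$; moreover $\zeta$ must be a constant, since $\zeta=h\cdot M$ is holomorphic and nowhere vanishing on the connected domain $\Omega$ with $\zeta^{q-1}$ constant, hence $\zeta'=0$ (using $q-1\ne 0$ in characteristic $p$). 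So everything comes down to (a) expressing the Moore determinant as a product of Eisenstein series, and (b) identifying the constant $\zeta$.

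For (a) I would use two elementary facts: that $w\mapsto E_w(z)^{-1}$ is $\BF_q$-linear (it is the identification of $V$ with $\phi^z[T]$ followed by the inclusion into $\Cinf$, and $e_{\Lambda_z}$ is $\BF_q$-linear), and that $M$ transforms under $\BF_q$-linear changes of its entries by $M(ax_1+bx_2,\,cx_1+dx_2)=(ad-bc)M(x_1,x_2)$ for $a,b,c,d\in\BF_q$ (immediate from the definition and the $\BF_q$-linearity of the $q$-power map). Writing $e_1=(T^{-1},0)$, $e_2=(0,T^{-1})$ and expanding $u,v$ in this basis yields
\[
M\big(E_u(z)^{-1},E_v(z)^{-1}\big)=\left|\begin{matrix}u_1&u_2\\ v_1&v_2\end{matrix}\right|\,M\big(E_{e_1}(z)^{-1},E_{e_2}(z)^{-1}\big),
\]
which reduces \eqref{eq:MooreDet2} to the case $(u,v)=(e_1,e_2)$ and, incidentally, makes both sides vanish when $u,v$ are $\BF_q$-dependent. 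The Moore Determinant Formula \cite[Cor.~1.3.7]{goss_basic_1996} in the shape $M(x_1,x_2)=x_1\prod_{\epsilon\in\BF_q}(x_2+\epsilon x_1)$, applied after transposing the two entries (which introduces a sign), then expresses $M\big(E_{e_1}(z)^{-1},E_{e_2}(z)^{-1}\big)$ as $-E_{(0,T^{-1})}(z)^{-1}\prod_{\epsilon\in\BF_q}E_{(T^{-1},\epsilon T^{-1})}(z)^{-1}$, once more using $\BF_q$-linearity of $w\mapsto E_w(z)^{-1}$ inside the product.

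For (b) I would feed in the known $t_T$-expansions at the cusp $\infty$ of $\Gamma(T)\backslash\Omega$ from \cite{gekeler_modulare_1984}: since $E_{(0,T^{-1})}(z)^{-1}=\lambda_T+o(t_T)$ and $E_{(T^{-1},\epsilon T^{-1})}(z)^{-1}=t_T^{-1}+o(1)$, the product above has leading term $\lambda_T\,t_T^{-q}$; comparing with the leading term $-t_T^{-q}$ of $h(z)^{-1}$ — which follows from $h(z)=-t(z)+o(t^2)$ (e.g.\ from \eqref{eq:hproduct} or \eqref{eq:Aexpansion}) together with $t(z)=t_T^q+O(t_T^{2q-1})$, the latter coming from the Carlitz functional equation $1/t(z)=T/t_T+t_T^{-q}$ — one reads off $\zeta=\lambda_T\left|\begin{smallmatrix}u_1&u_2\\ v_1&v_2\end{smallmatrix}\right|$, consistent with $\lambda_T^{q-1}=-T$ and $\left|\begin{smallmatrix}u_1&u_2\\ v_1&v_2\end{smallmatrix}\right|^{q-1}=1$. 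This gives \eqref{eq:MooreDet2}; specialising $(u,v)$ (for instance to $(e_2,e_1)$, whose determinant is $-1$) and taking reciprocals gives \eqref{eq:h1}; and \eqref{eq:h2} follows by observing that $\{(0,T^{-1})\}\cup\{(T^{-1},\epsilon T^{-1}):\epsilon\in\BF_q\}$ is a set of representatives for $\BP(V)$, and that passing to any other system of representatives rescales each $E_w(z)$ by an element of $\BF_q^*$ (since $E_{cw}(z)=c^{-1}E_w(z)$), so the overall constant only changes within $\BF_q^*$.

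The only real obstacle is bookkeeping of signs and $\BF_q^*$-scalars: the antisymmetry of $M$ under transposing its arguments, the Wilson-type identity $\prod_{\epsilon\in\BF_q^*}\epsilon=-1$ that relates the two natural systems of representatives for $\BP(V)$, and the need for the leading-term comparison to select the same $(q-1)$th root of $-T$ uniformly in $z$. None of these is conceptually difficult, but they must all be tracked carefully for the constants appearing in \eqref{eq:MooreDet2}–\eqref{eq:h2} to come out correctly.
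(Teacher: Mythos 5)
Your proposal is correct and follows essentially the same route as the paper, which in fact proves the theorem in the discussion immediately preceding it: the Moore-determinant form of $\phi^z_T$ combined with $h^{q-1}=-\Delta$, the $\BF_q$-linearity of $w\mapsto E_w(z)^{-1}$ together with the Moore Determinant Formula, and the comparison of leading $t_T$-coefficients at the cusp to identify the $(q-1)$th root of $-T$ as $\lambda_T$. One small caveat: in characteristic $p$ the implication ``$\zeta'=0$, hence $\zeta$ constant'' fails (consider $z^p$), so to see that $\zeta=h\cdot M$ is constant you should instead note that it is a holomorphic function on the connected domain $\Omega$ taking values in the finite zero set of $X^{q-1}+T$.
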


Note that we can also obtain (\ref{eq:h1}) and (\ref{eq:h2}), up to a multiplicative constant, from (\ref{eq:Deltaroot}) and (\ref{eq:Dprod}).

%
%

\section{The Weil pairing}

The {\em determinant} of a rank 2 Drinfeld module $\phi_T(X) = TX + gX^q + \Delta X^{q^2}$ is the rank 1 Drinfeld module defined by 
\[
\psi_T(X) = TX - \Delta X^q,
\]
and for each $a = a_0 + a_1T + \cdots + a_nT^n\in A$ the {\em Weil pairing}  \cite[Prop. 7.4]{van_der_heiden_weil_2004} is given by
\begin{align}
w_a : \phi[a]\times \phi[a] & \longto \psi[a], \nonumber \\ 
(x,y) & \longmapsto \sum_{i=0}^{n-1}\sum_{j=0}^{n-i-1} a_{i+j+1} M\big(\phi_{T^j}(x),\phi_{T^i}(y)\big). \label{eq:Weil}
\end{align}
In the case $a=T$, the Weil pairing is particularly simple -- it is the Moore determinant: $w_T(x,y) = M(x,y) = xy^q-x^qy$. In this light, we see that (\ref{eq:Deltaroot}) and (\ref{eq:MooreDet2}) each imply
\begin{equation}
\psi_T^z\big(\lambda_T \, h(z)^{-1}\big) = 0.
\label{eq:Determinant}
\end{equation}

The determinant of $\phi^z$ is isomorphic to the Carlitz module via
\begin{equation}
\psi^z = h(z)^{-1}\cdot\rho\cdot h(z),
\end{equation}
so its associated lattice is $L_z = \bar\pi h(z)^{-1} \, A$, and its $a$-torsion module is generated by $\bar\pi e_A(\frac{1}{a}) h(z)^{-1} $. 


When $a=T$, (\ref{eq:MooreDet2}) gives an explicit expression for the Weil pairing on~$\phi^z$. In general, we have

\begin{thm}
Let $a\in A$.
The Weil pairing on $\phi^z[a]$ is given by
\begin{equation}
w_a\Big(e_{\Lambda_z}\big(\bar\pi a^{-1}(u_1z+u_2)\big),e_{\Lambda_z}\big(\bar\pi a^{-1}(v_1z+v_2)\big)\Big)  
                  = \bar\pi e_A\big(a^{-1}(u_1v_2-u_2v_1)\big) h(z)^{-1}. \label{eq:weil}
\end{equation}
\end{thm}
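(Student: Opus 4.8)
The plan is to reduce the general case to the case $a = T$ already treated, by exploiting the functional equation $e_{\Lambda_z}(a x) = \phi^z_a(e_{\Lambda_z}(x))$ together with the bilinear-in-$a$ shape of the Weil pairing formula \eqref{eq:Weil}. Write $a = a_0 + a_1 T + \cdots + a_n T^n$. First I would set $\xi_j = e_{\Lambda_z}\big(\bar\pi a^{-1} T^j (u_1 z + u_2)\big)$ and $\eta_i = e_{\Lambda_z}\big(\bar\pi a^{-1} T^i(v_1 z + v_2)\big)$, so that by the functional equation $\xi_j = \phi^z_{T^j}(x)$ and $\eta_i = \phi^z_{T^i}(y)$ where $x,y$ are the two torsion points on the left of \eqref{eq:weil}. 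Thus the right-hand side of \eqref{eq:Weil} becomes $\sum_{i,j} a_{i+j+1} M(\xi_j, \eta_i)$. The key computational input is that each Moore determinant $M(\xi_j,\eta_i) = \xi_j \eta_i^q - \xi_j^q \eta_i$ can be evaluated by the same $\BF_q$-linearity argument used to prove \eqref{eq:MooreDet2}: the point $\xi_j$ is $e_{\Lambda_z}$ applied to a lattice-vector of "level $a$" with coordinates $(T^j a^{-1} u_1, T^j a^{-1} u_2)$ modulo the appropriate lattice, and $M$ is $\BF_q$-bilinear in its two arguments in a way that factors through the determinant of the coordinate matrix.

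The second step is the bookkeeping that makes the double sum collapse. By the computation in Section 3 (the case $a=T$, formula \eqref{eq:MooreDet2}), for torsion points coming from coordinate vectors $p, q$ one has $M(\text{point}(p), \text{point}(q)) = \lambda_?\, \det\!\begin{pmatrix} p_1 & p_2 \\ q_1 & q_2\end{pmatrix} h(z)^{-1}$ up to the correct normalizing torsion constant; here $\det\!\begin{pmatrix} T^j a^{-1} u_1 & T^j a^{-1} u_2 \\ T^i a^{-1} v_1 & T^i a^{-1} v_2\end{pmatrix} = T^{i+j} a^{-2}(u_1 v_2 - u_2 v_1)$. Substituting, the right side of \eqref{eq:Weil} becomes $\big(\sum_{i,j} a_{i+j+1} T^{i+j}\big) \cdot (\text{normalization}) \cdot a^{-2}(u_1 v_2 - u_2 v_1) h(z)^{-1}$, and the inner double sum telescopes: $\sum_{k\geq 0} T^k \sum_{i+j=k} a_{k+1} = \sum_{k\geq 0}(k+1) a_{k+1} T^k$, which in characteristic $p$ one must handle carefully — but in fact it is cleaner to recognize $\sum_{i,j} a_{i+j+1} T^{i+j}$ directly as (a shift of) $a(T)/$ something, and more to the point to match it against the expansion of $\bar\pi e_A(a^{-1}\cdot -)$. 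The honest way to pin down the constant is to observe that both sides of \eqref{eq:weil}, as functions of the pair of torsion points, are $\phi^z[a]$-valued, $\BF_q$-bilinear and alternating, hence each is a scalar multiple of the unique (up to scalar) such pairing — this is the structural fact behind \cite[Prop. 7.4]{van_der_heiden_weil_2004} — so it suffices to compare on one convenient pair.

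Concretely, the final step is to specialize. Take $u = (1,0)$, $v = (0,1)$, so $u_1 v_2 - u_2 v_1 = 1$, and compare the leading terms in the parameter $t$ at the cusp, exactly as was done in Section 3 to fix $\lambda_T$ via the $t_T$-expansion and the normalization $h(z) = -t(z) + o(t^2)$. The left side $w_a$ on the standard basis of $\phi^z[a]$ lands in $\psi^z[a]$, and using the isomorphism $\psi^z = h(z)^{-1}\rho\, h(z)$ identified in the excerpt — whose $a$-torsion is generated by $\bar\pi e_A(a^{-1}) h(z)^{-1}$ — one reads off that the proportionality constant is exactly $1$, giving $\bar\pi e_A(a^{-1}(u_1v_2 - u_2 v_1)) h(z)^{-1}$ in general by bilinearity. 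I expect the main obstacle to be the combinatorial identity $\sum_{i+j+1 \le n,\ i,j\ge 0} a_{i+j+1} M(\phi_{T^j}(x),\phi_{T^i}(y))$ reducing to a single Moore determinant $M(x,y)$-type term after substitution — i.e. verifying that the coefficient-juggling in \eqref{eq:Weil} is precisely what is needed to turn $a^{-2}\cdot(\text{poly in }T)$ into $e_A(a^{-1}\cdot)$, rather than some nearby expression — and making sure the characteristic-$p$ subtleties in $\sum (k+1)a_{k+1}T^k$ don't bite; the structural uniqueness-of-the-alternating-pairing argument is the safety net that sidesteps this if the direct computation gets unwieldy.
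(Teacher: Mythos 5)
Your proposal is correct and, in the end, takes exactly the paper's route: reduce to $(u_1,u_2)=(1,0)$, $(v_1,v_2)=(0,1)$ via the alternating/bilinear property, note both sides lie in $\psi^z[a]$ so they agree up to a constant, and pin that constant down by comparing leading coefficients of the $t_a$-expansions. The direct expansion of the double sum in \eqref{eq:Weil} that you sketch first is not needed (and its key step is shaky, since the Moore determinant is only $\BF_q$-bilinear, so you cannot pull out the $A$-valued factors $T^j a^{-1}u_i$ as a determinant), but you correctly fall back on the structural argument, which is the proof the paper gives.
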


\begin{proof}It suffices to show this for $(u_1,u_2)=(1,0)$ and $(v_1,v_2)=(0,1)$, since the general case follows from the alternating property of the Weil pairing.

Both sides lie in $\psi^z[a]$, hence are equal up to a multiplicative constant. This constant can be computed by comparing the first coefficients of their $t_a(z) := \bar\pi^{-1}e_A(a^{-1}z)$-expansions, as in the previous section.
\end{proof}

It is instructive to compare this with the analogous expression for the Weil pairing on an elliptic curve $E=\BC/(z\BZ+\BZ)$, see \cite[Ex. 1.15, p.89]{Silverman_advanced_1994},
\[
w_N\big(N^{-1}(u_1z+u_2),N^{-1}(v_1z+v_2)\big) = e^{2\pi i N^{-1}(u_1v_2-u_2v_1)},
\]
where the right hand side does not depend on $z$. The reason for this is that there is only one $\BG_{\mathrm m}$, whereas there are many rank 1 Drinfeld modules; the factor $h(z)^{-1}$ in (\ref{eq:weil}) serves to pick out the correct one, i.e. $\psi^z$.

\section{Modular functions of non-zero type}

A {\em modular function} of type $m$ is a meromorphic function $f : \Omega \to \Cinf$ satisfying
\begin{enumerate}
	\item[(a)] $\displaystyle f(\gamma z) = \det(\gamma)^{-m} f(z)$ for all $\gamma\in\Gamma$, and 
	\item[(b)] $f(z)$ is meromorphic at infinity, i.e. $a_n=0$ for $n\ll 0$ in the Fourier expansion of $f(z)$.
\end{enumerate}
As before, non-zero modular functions must satisfy $2m\equiv 0 \pmod{q-1}$.

It is well-known that the field of modular functions of type 0 is the rational function field over $\Cinf$ generated by
\[
j(z) = \frac{g(z)^{q+1}}{\Delta(z)}.
\]

\begin{thm}
The field of modular functions of arbitrary type is the rational function field $\Cinf(\tilde{\jmath})$ over $\Cinf$ generated by the modular function
\begin{equation}
\tilde{\jmath}(z) := \frac{g(z)^{m(q+1)/(q-1)}}{h(z)^m},
\label{eq:jtilde}
\end{equation}
of type $-m$, where $m$ is the least positive integer for which $q-1 | m(q+1)$. 

If $q$ is even, then $m=q-1$ and $\tilde{\jmath}(z)=j(z)$, whereas if $q$ is odd, we have $m=(q-1)/2$ and $\tilde{\jmath}(z)^2=j(z)$. The function $\tilde{\jmath}$ is holomorphic on $\Omega$ with a pole of order $m$ at $\infty$.
\end{thm}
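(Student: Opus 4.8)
The plan is to establish the claim in three stages: first verify that $\tilde{\jmath}$ is a well-defined modular function of the stated type, then determine its order of vanishing at $\infty$, and finally prove that it generates the whole field of modular functions of arbitrary type. The weight of the numerator $g^{m(q+1)/(q-1)}$ is $(q-1)\cdot m(q+1)/(q-1) = m(q+1)$, while the denominator $h^m$ has weight $m(q+1)$; so the quotient has weight $0$, as it must for a modular function. For the type: $g$ has type $0$, $h$ has type $1$, so $\tilde{\jmath}$ has type $-m$, and the congruence $2(-m)\equiv 0\pmod{q-1}$ is exactly the defining condition $q-1\mid m(q+1)$ rewritten via $m(q+1)\equiv 2m\pmod{q-1}$. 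The exponent $m(q+1)/(q-1)$ is an integer precisely by the choice of $m$, so $\tilde\jmath$ is genuinely a ratio of modular forms, hence meromorphic on $\Omega$ and at $\infty$; since $h$ is nowhere vanishing on $\Omega$ by Gekeler's theorem, $\tilde\jmath$ is in fact holomorphic on $\Omega$. The explicit values of $m$ come from the elementary fact that $\gcd(q+1,q-1)$ is $2$ if $q$ is odd and $1$ if $q$ is even: in the even case $m=q-1$ and $\tilde\jmath = g^{q+1}/h^{q-1} = -g^{q+1}/\Delta = -j$ by \eqref{eq:Deltaroot} (a harmless sign I would absorb, or note), and in the odd case $m=(q-1)/2$ and squaring gives $\tilde\jmath^2 = g^{q+1}/h^{q-1} = -g^{q+1}/\Delta$, again $j$ up to sign — I would check the sign conventions here carefully and state the clean identity $\tilde\jmath^2 = j$ if the normalization of $h$ makes it work out, otherwise record the constant.

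Next I would compute the order at $\infty$. From the expansions $g = 1 + o(1)$ (actually $g$ has constant term a nonzero element of $\Cinf^*$; I would use the standard normalization where $g$ starts with a unit) and $h = -t + o(t^2)$, we get that $h^m$ has a zero of order $m$ in $t$ at $\infty$, while $g^{m(q+1)/(q-1)}$ is a unit there. Hence $\tilde\jmath = g^{\cdots}/h^m$ has a pole of order exactly $m$ at $\infty$, as claimed.

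For the generation statement, the strategy is a degree count combined with the known structure in type $0$. Let $F$ denote the field of modular functions of arbitrary type; it contains $\Cinf(j)$ as the subfield of type-$0$ functions. I claim $[F:\Cinf(j)]$ divides $(q-1)/m'$ where... — more directly: any $f\in F$ has some type $m'$, and $f^{(q-1)/\gcd(m',q-1)}$ has type $0$ hence lies in $\Cinf(j)$; meanwhile $\tilde\jmath$ has type $-m$ where $m$ is the \emph{minimal} positive integer with $f$ of type $-m$ possible at all (since types of nonzero modular functions form the subgroup of $\BZ/(q-1)$ generated by $m$, by the congruence $2m\equiv 0$). So every type occurring is a multiple of $m$, i.e. of the form $-km$, and given $f$ of type $-km$ the function $f/\tilde\jmath^{\,k}$ has type $0$, hence $f/\tilde\jmath^{\,k}\in\Cinf(j)$. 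Since $j$ is a polynomial (in fact $\pm$ a monomial times a power) in $\tilde\jmath$ — precisely $j = -\tilde\jmath^{(q-1)/m}$ up to sign in the odd case and $j=-\tilde\jmath$ in the even case — we conclude $f\in\Cinf(\tilde\jmath)$. Thus $F=\Cinf(\tilde\jmath)$, and $\tilde\jmath$ is transcendental over $\Cinf$ because $j$ already is.

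\textbf{The main obstacle} I anticipate is pinning down the exact constants (signs and elements of $\BF_q^*$) relating $\tilde\jmath$, $j$, $g$, $h$, and $\Delta$, and more substantively, justifying that the types of \emph{nonzero} modular functions are \emph{exactly} the multiples of $m$ in $\BZ/(q-1)$ — the congruence $2m\equiv 0\pmod{q-1}$ shows they lie in that subgroup, but one must exhibit a nonzero function of type $-m$, which is precisely what $\tilde\jmath$ does, so the argument is not circular provided I first check $\tilde\jmath\neq 0$ (clear, as $g,h\not\equiv 0$). A secondary point requiring care is that $\Cinf(j)$ really is all of type $0$: this is the cited well-known fact, which I would simply invoke. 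With those in hand the degree/type bookkeeping is routine.
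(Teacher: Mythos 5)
Your proof is correct, but it takes a genuinely different route from the paper's. The paper argues through the graded ring of modular forms: given a modular function $f$ with poles at $z_1,\dots,z_n$, it multiplies by $h^l\prod_i\big(j-j(z_i)\big)^l$ for $l$ large to obtain a holomorphic modular form of weight $l(q+1)$, invokes \eqref{eq:gradedring} to write this as a weighted-homogeneous polynomial in $g$ and $h$, and divides back out; every surviving monomial $g^a h^{b-l}$ satisfies $(q-1)\mid (l-b)(q+1)$ and is therefore an integral power of $\tilde{\jmath}$. You instead take the type-$0$ statement (field of type-$0$ functions $=\Cinf(j)$) as known input -- which the paper also licenses by calling it well known -- and reduce to it by type bookkeeping: the admissible types form the subgroup of $\BZ/(q-1)$ generated by $m$, so any $f$ of type $-km$ yields $f/\tilde{\jmath}^{\,k}$ of type $0$, hence in $\Cinf(j)\subset\Cinf(\tilde{\jmath})$. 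Your route is shorter and sidesteps the pole-clearing step (where, as written in the paper, one should really take the exponent of $h$ much larger than that of $\prod_i(j-j(z_i))$ to retain holomorphy at $\infty$, since $j$ has a pole of order $q-1$ in $t$ there); the paper's route has the merit of not presupposing the type-$0$ case, since it reproves it from \eqref{eq:gradedring}. One detail you rightly flag: with the normalization $h^{q-1}=-\Delta$ of \eqref{eq:Deltaroot} one gets $\tilde{\jmath}^{\,2}=g^{q+1}/h^{q-1}=-j$ for odd $q$ (and $\tilde{\jmath}=j$ for even $q$, where $-1=1$); the sign is harmless for the generation statement because $\Cinf(j)=\Cinf(-j)$, but the clean identity $\tilde{\jmath}^{\,2}=j$ requires absorbing that sign. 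Your remaining verifications (weight and type of $\tilde{\jmath}$, holomorphy on $\Omega$ from the non-vanishing of $h$, pole of order exactly $m$ at $\infty$ from $g=1+o(1)$ and $h=-t+o(t^2)$, transcendence via $j$) are all sound.
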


\begin{proof}
Let $f$ be a modular function, with poles $z_1,\ldots,z_n\in\Omega$. Then for $l$ sufficiently large, $f(z)h(z)^l\prod_{i=1}^n\big(j(z)-j(z_i)\big)^l$ is a modular form of weight $l(q+1)$, hence by (\ref{eq:gradedring}) equals $F[g,h]$, where $F[X,Y]\in\Cinf[X,Y]$ is a homogeneous polynomial of weighted degree $l(q+1)$, where $X$ and $Y$ are assigned weights of $q-1$ and $q+1$, respectively. The result now follows easily.
\end{proof}


Suppose from now on that $q$ is odd, and consider the map
\[
\pi : \Omega \longto \BP_{\Cinf}(q-1,q+1); \quad z \longmapsto [g(z):h(z)], 
\]
where $\BP_{\Cinf}(q-1,q+1)$ is the weighted projective space \cite{beltrametti_introduction_1986} over $\Cinf$ 
with weights $(q-1, q+1)$; this is the set of $\Cinf$-valued points on $\Proj \Cinf[g,h]$.

Denote by $\Gamma_2$ the subgroup of matrices in $\Gamma$ with square determinant,
\[
\Gamma_2 = \{\gamma \in \Gamma \;|\; \det(\gamma)\in\BF_q^{*2}\}.
\] 
We easily compute that, for $z_1,z_2\in\Omega$, we have $\pi(z_1)=\pi(z_2)$ if and only if $z_1=\gamma(z_2)$ with 
$\gamma \in \Gamma_2$ 
if $g(z_1) \neq 0$ and $\gamma\in\Gamma$ if $g(z_1)=0$. 

The image of $\pi$ corresponds to the open affine where $h\neq 0$, which is 
\[
\Spec \big(\Cinf[g,h,h^{-1}]\big)_{\deg =0} = \Spec \Cinf[\tilde{\jmath}].
\] 
Thus we see that $\tilde{\jmath}$ induces a map
\[
\tilde{\jmath} : \Gamma_2\backslash\Omega \longto \Cinf
\]
which is bijective except above $0$, which has two pre-images.

Lastly, we describe a moduli interpretation of $\tilde{\jmath}$: when $\tilde{\jmath}\neq 0$ it pa\-ram\-e\-trizes isomorphism classes of Drinfeld modules decorated with $\BF_q^{*2}$-classes of $T$-torsion points on their determinant modules. 

More precisely, let $F$ be an algebraically closed extension of $K$ and fix  $0\neq \lambda_T \in \rho[T]\subset F$, a non-zero $T$-torsion point of the Carlitz module 
(equivalently, a $(q-1)$th root of $-T$). 
We consider pairs $(\phi,\lambda)$, where $\phi$ is a rank 2 Drinfeld module defined by $\phi_T(X) = TX+gX^q+\Delta X^{q^2}$ over  $F$  and $0\neq\lambda\in\psi[T]$ is a non-zero $T$-torsion point of its determinant module $\psi$. To such a pair we associate its $\tilde{\jmath}$-invariant 
\begin{equation}
\tilde{\jmath} = \left({\lambda}\,{\lambda^{-1}_T}\right)^{(q-1)/2} g^{(q+1)/2} \quad\in F.
\end{equation}
We call two such pairs, $(\phi,\lambda)$ and $(\phi',\lambda')$, isomorphic if either $g=g'=0$ or if $g\neq 0$ and there exist $c\in\Cinf^*$ and $\epsilon\in\BF_q^{*2}$ such that $\phi' = c\cdot\phi\cdot c^{-1}$ and $\lambda' = \epsilon c\lambda$. As simple computation shows that two such pairs are isomorphic over $F$ if and only if their $\tilde{\jmath}$-invariants are equal. We have shown

\begin{thm}
The $F$-valued points of $\Spec F[\tilde{\jmath}]$ parametrize isomorphism classes of pairs $(\phi,\lambda)$ described above. \qed
\end{thm}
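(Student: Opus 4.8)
The plan is to make explicit the correspondence between pairs $(\phi,\lambda)$ over $F$ and $F$-points of $\Spec F[\tilde{\jmath}]=\mathbb{A}^1_F$, and to check it is a bijection on isomorphism classes. First I would recall that every rank $2$ Drinfeld module over $F$ is isomorphic to some $\phi^z$ (since $F\supseteq K$ is algebraically closed, $\Omega$ already carries all isomorphism classes over $\Cinf$, and one reduces the general statement to the analytic one, or argues purely algebraically: the isomorphism class of $\phi$ with $\phi_T(X)=TX+gX^q+\Delta X^{q^2}$ is determined by the pair $(g,\Delta)$ up to $c\mapsto (c^{q-1}g, c^{q^2-1}\Delta)$, and $\Delta\neq 0$). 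The determinant $\psi_T(X)=TX-\Delta X^q$ is a rank $1$ Drinfeld module, so $\psi[T]$ is an $\BF_q$-line, and choosing $0\neq\lambda\in\psi[T]$ amounts to choosing one of its $q-1$ nonzero points; the isomorphism $\psi\cong\rho$ is unique up to $\BF_q^*$, so $\lambda$ corresponds to $\mu\cdot\lambda_T$ for a well-defined $\mu\in\BF_q^*$, and the quantity $(\lambda\lambda_T^{-1})^{(q-1)/2}=\mu^{(q-1)/2}\in\{\pm1\}$ records exactly the $\BF_q^{*2}$-coset of $\mu$.

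Next I would verify the two claimed properties of the invariant $\tilde{\jmath}=(\lambda\lambda_T^{-1})^{(q-1)/2}g^{(q+1)/2}$. For well-definedness under the isomorphism relation: replacing $(\phi,\lambda)$ by $(c\phi c^{-1},\epsilon c\lambda)$ with $\epsilon\in\BF_q^{*2}$ sends $g\mapsto c^{q-1}g$, hence $g^{(q+1)/2}\mapsto c^{(q-1)(q+1)/2}g^{(q+1)/2}$; and it sends $\lambda\mapsto\epsilon c\lambda$, so $(\lambda\lambda_T^{-1})^{(q-1)/2}\mapsto (\epsilon c)^{(q-1)/2}(\lambda\lambda_T^{-1})^{(q-1)/2}=c^{(q-1)/2}(\lambda\lambda_T^{-1})^{(q-1)/2}$ because $\epsilon^{(q-1)/2}=1$. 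Wait — the factor $c^{(q-1)/2}$ from $\lambda$ and $c^{(q-1)(q+1)/2}$ from $g^{(q+1)/2}$ do not obviously cancel. The point is that $\lambda\in\psi[T]$ is not a free parameter once $\phi$ is scaled: $\psi'=c^{?}\psi c^{-?}$ forces a compatible normalization, and the correct bookkeeping (which I would carry out carefully, matching the relation $\psi^z=h(z)^{-1}\rho\, h(z)$ and $L_z=\bar\pi h(z)^{-1}A$ from Section~4) shows the two contributions of $c$ combine to the single factor coming from the $g=g'=0$ exclusion being the only non-cancelling case; concretely, for $(\phi^z,\lambda_T h(z)^{-1})$ one gets $\tilde{\jmath}=1\cdot g(z)^{(q+1)/2}$ times a root of $j(z)\,h(z)^{?}$, reproducing $g(z)^{m(q+1)/(q-1)}h(z)^{-m}$ with $m=(q-1)/2$, exactly the function of \eqref{eq:jtilde}. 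So the invariant of a pair equals the value at the corresponding $z$ of the modular function $\tilde{\jmath}$ of Theorem~5.1.

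Finally I would prove the bijection. Surjectivity: given $t\in F=\Spec F[\tilde{\jmath}](F)$, if $t\neq 0$ then since $j=\tilde{\jmath}^2$ surjects onto $\Cinf$ (and onto $F$ after base change) there is $z$ with $j(z)=t^2$, giving a pair with invariant $\pm t$; flipping $\lambda$ by a non-square $\mu$ flips the sign of $(\lambda\lambda_T^{-1})^{(q-1)/2}$, hence realizes $t$ itself. The case $t=0$ is the locus $g=0$, handled by the single isomorphism class with $g=g'=0$. Injectivity: if $(\phi,\lambda)$ and $(\phi',\lambda')$ have equal invariants, then $g,g'$ are either both $0$ (and then isomorphic by fiat) or both nonzero; in the latter case equal $\tilde{\jmath}$ forces $j(\phi)=\tilde{\jmath}^2=j(\phi')$, so $\phi'\cong c\phi c^{-1}$ for some $c\in\Cinf^*$, and then equality of the sign factors $(\lambda\lambda_T^{-1})^{(q-1)/2}$ after the scaling pins down $\lambda'$ up to $\BF_q^{*2}$, i.e.\ $(\phi,\lambda)\cong(\phi',\lambda')$. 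The main obstacle I anticipate is precisely the scaling bookkeeping in the middle step: one must track how $c\in\Cinf^*$ acts simultaneously on $g$, on $\Delta$, on the isomorphism $\psi\cong\rho$, and hence on $\lambda$, and confirm that the exponent $(q-1)/2$ on the $\lambda$-factor is exactly what is needed to make $\tilde{\jmath}$ invariant under $\BF_q^{*2}$ but sensitive to the full $\BF_q^*$; everything else is the routine surjectivity/injectivity argument modelled on the type-$0$ case $j=g^{q+1}/\Delta$.
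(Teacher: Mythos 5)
Your skeleton (well-definedness of the invariant, then surjectivity and injectivity on isomorphism classes) matches what the paper leaves implicit, and your surjectivity argument via $j=\pm\tilde{\jmath}^2$ together with flipping $\lambda$ by a non-square is fine. But the proof has a genuine gap at exactly the step you flag and then defer: you observe, correctly, that if $g\mapsto c^{q-1}g$ and $\lambda\mapsto \epsilon c\lambda$ then $\tilde{\jmath}$ picks up $c^{(q-1)/2}\cdot c^{(q-1)(q+1)/2}\neq 1$, and you then wave at ``correct bookkeeping'' without doing it. Since both well-definedness and injectivity of the invariant rest on this computation, the central claim is not actually proved. The sentence about the two contributions ``combining to the single factor coming from the $g=g'=0$ exclusion'' does not resolve anything, and your earlier assertion that $\lambda$ determines a \emph{well-defined} $\mu\in\BF_q^*$ via an isomorphism $\psi\cong\rho$ is also off ($\mu$ is only defined up to the choice of that isomorphism, i.e.\ up to $\BF_q^*$; the invariant must be computed directly in $F$).

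The missing ingredient is the transformation law for $\lambda$: an isomorphism $u\colon\phi\to\phi'$ (multiplication by $u\in F^*$, so $g'=u^{1-q}g$ and $\Delta'=u^{1-q^2}\Delta$) induces multiplication by $u^{q+1}$ on the determinant modules, not by $u$. One sees this either from the homogeneity $w_T(ux,uy)=u^{q+1}w_T(x,y)$ of the Weil pairing \eqref{eq:Weil}, or directly from $\psi_T(\lambda)=0$, which gives $\lambda^{q-1}=T/\Delta$ and hence $(\lambda')^{q-1}=T/\Delta'=(u^{q+1}\lambda)^{q-1}$, so $\lambda'\in\BF_q^*\,u^{q+1}\lambda$. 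With $\lambda'=\epsilon u^{q+1}\lambda$, $\epsilon\in\BF_q^{*2}$, the two factors of $\tilde{\jmath}$ transform by $u^{(q+1)(q-1)/2}$ and $u^{(1-q)(q+1)/2}$ respectively and cancel exactly; this is also what the paper's weighted-projective normalization $[g:h]\sim[c^{q-1}g:c^{q+1}h]$, $\lambda=\lambda_T h^{-1}$ encodes. (The paper's own phrase ``$\lambda'=\epsilon c\lambda$'' must be read as ``$\lambda'$ is the image of $\epsilon\lambda$ under the induced isomorphism of determinants''; your instinct that the literal reading fails is right, but the theorem is not proved until you replace it with the $u^{q+1}$ law and check the exponents.) The same identity $\lambda^{q-1}=T/\Delta$ is also what you need to make the injectivity step precise: equal invariants with $g,g'\neq0$ give $j(\phi)=j(\phi')$, hence an isomorphism $u$, and then comparing $(\lambda\lambda_T^{-1})^{(q-1)/2}$ with $(\lambda'\lambda_T^{-1})^{(q-1)/2}=(\epsilon')^{(q-1)/2}u^{(q+1)(q-1)/2}(\lambda\lambda_T^{-1})^{(q-1)/2}\cdot(\text{the }g\text{-factor})^{-1}$ pins $\lambda'$ to the $\BF_q^{*2}$-orbit of $u^{q+1}\lambda$.
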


The $j$-line $\Spec F[j]$ parametrizes isomorphism classes of Drinfeld modules, and the forgetful functor gives the double cover $\Spec F[\tilde{\jmath}]\to \Spec F[j]$, ramified above $0$, as $\tilde{\jmath}^2=j$.

Lastly, a point $[g:h]\in\BP_F(q-1,q+1)$ defines (up to isomorphism) a pair $(\phi,\lambda)$ with
%
\begin{align*}
\phi_T(X) & = TX + gX^q - h^{q-1}X^{q^2},  \\
\psi_T(X) & = TX + h^{q-1}X^{q-1}, \\
\lambda & = \lambda_T\,h^{-1} \in \psi[T],  \\
\tilde{\jmath}(\phi,\lambda) & = g^{(q+1)/2} / h^{(q-1)/2}
\end{align*}
and moreover, every pair $(\phi,\lambda)$ arises in this way.


%

%
%
%
%
%
%
%
%
%
%


\begin{thebibliography}{10}


\bibitem{beltrametti_introduction_1986}
Mauro Beltrametti and Lorenzo Robbiano.
\newblock {\em Introduction to the theory of weighted projective spaces.}
\newblock Expos. Math. {\bf 4(2)} (1986), 111--162.

\bibitem{cornelissen_drinfeld_1997-1}
Gunther Cornelissen.
\newblock {\em Drinfeld modular forms of level $T$.}
\newblock In {\em Drinfeld modules, modular schemes and applications}
  ({{Alden-Biesen}}, 1996), pages 272--281. {World Sci. Publ., River Edge,
  NJ}, 1997.

\bibitem{gekeler_modulare_1984}
Ernst-Ulrich Gekeler.
\newblock {\em Modulare {{Einheiten}} f{\"u}r {{Funktionenk{\"o}rper}}.}
\newblock J. Reine Angew. Math. {\bf 348} (1984), 94--115.

\bibitem{gekeler_product_1985}
Ernst-Ulrich Gekeler.
\newblock {\em A product expansion for the discriminant function of {{Drinfel}}'d
  modules of rank two.}
\newblock J. Number Theory {\bf 21(2)} (1985), 135--140.

\bibitem{gekeler_coefficients_1988}
Ernst-Ulrich Gekeler.
\newblock {\em On the coefficients of {{Drinfel}}'d modular forms.}
\newblock Invent. Math. {\bf 93(3)} (1988), 667--700.

\bibitem{gekeler_quasi-periodic_1989}
Ernst-Ulrich Gekeler.
\newblock {\em Quasi-periodic functions and Drinfel'd modular forms.}
\newblock Compos. Math. {\bf 69(3)} (1989), 277--293.

\bibitem{gekeler_survey_1999}
Ernst-Ulrich Gekeler.
\newblock {\em A survey on {{Drinfeld}} modular forms.}
\newblock Turkish J. Math. {\bf 23(4)} (1999), 485--518.

\bibitem{gerritzen_schottky_1980}
Lothar Gerritzen and Marius {van der Put}.
\newblock {\em Schottky groups and {{Mumford}} curves}, volume 817 of {\em
  Lecture Notes in Mathematics}.
\newblock {Springer, Berlin}, 1980.

\bibitem{goss_modular_1980} 
David Goss.
\newblock {\em Modular forms for $\BF_r[T]$.} 
\newblock J. Reine Angew. Math. {\bf {317}} (1980), 16--39.


\bibitem{goss_basic_1996}
David Goss.
\newblock {\em Basic structures of function field arithmetic}, volume~35 of
  {\em Ergebnisse der Mathematik und ihrer Grenzgebiete (3)}.
\newblock {Springer-Verlag, Berlin}, 1996.


\bibitem{lopez_non-standard_2010} 
Bartolom{\'e} L{\'o}pez. 
\newblock {\em A non-standard Fourier expansion for the Drinfeld discriminant function.} 
\newblock Arch. Math. {\bf 95} (2010), 143--150.



\bibitem{Silverman_advanced_1994}
Joseph H. Silverman.
\newblock {\em Advanced topics in the arithmetic of elliptic curves}, volume~151 of 
{\em Graduate texts in Mathematics}. 
\newblock Springer-Verlag, Berlin, 1994.


\bibitem{van_der_heiden_weil_2004}
Gert-Jan {van der Heiden}.
\newblock {\em Weil pairing for {{Drinfeld}} modules.}
\newblock Monatsh. Math. {\bf 143(2)} (2004), 115--143.

\end{thebibliography}
\end{document}